\setlist[itemize]{itemsep=-1mm}
\newtheorem{thm}{Theorem}
\newtheorem{lem}[thm]{Lemma}
\newtheorem{rem}[thm]{Remark}
\newtheorem{nota}[thm]{Notation}
\newcommand{\E}{\mathbb{E}}
\newcommand{\R}{\mathbb{R}}
\newcommand{\cbar}{\overline{c}}
\newcommand{\e}{e^{-y^2/8}}
\DeclareMathOperator{\Span}{Span}
\numberwithin{equation}{section}
\numberwithin{thm}{section}
\title{Population Stabilization in Branching Brownian Motion with absorption}
\author{Christopher Henderson}
\begin{document}

\maketitle

\begin{abstract}
We consider, through PDE methods, branching Brownian motion with drift and absorption.  It is well known that there exists a critical drift which separates those processes which die out almost surely and those which survive with positive probability.  In this work, we consider lower order corrections to the critical drift which ensures a non-negative, bounded expected number of particles and convergence of this expectation to a limiting non-negative number, which is positive for some initial data.   In particular, we show that the average number of particles stabilizes at the convergence rate $O(\log(t)/t)$ if and only if the multiplicative factor of the $O(t^{-1/2})$ correction term is $3\sqrt{\pi} t^{-1/2}$.  Otherwise, the convergence rate is $O(1/\sqrt{t})$.  We point out some connections between this work and recent work investigating the expansion of the front location for the initial value problem in Fisher-KPP~\cite{Bramson78, Bramson83, HNRR13, HNRR12}.
\end{abstract}

\section{Introduction}

In this paper we consider a probabilistic model, branching Brownian motion, through analytic means.  This process evolves as Brownian motion and splits at a constant rate into two particles undergoing independent Brownian motion.  In addition, we are interested in this model where the particles are pushed by a given drift, $-\dot{X}$ and where particles are killed upon reaching the origin.

This connection to the Fisher-KPP equation and its use as a model for populations undergoing selection have made branching Brownian motion the subject of intense interest in recent years~\cite{ABBS, ArguinBovierKistler, BrunetDerrida, BBS11, BBS13, HNRR12, HNRR13, Harris, HarrisHarris, HarrisHarrisKyprianou, Kesten, FangZeitouni, Maillard, MaillardZeitouni, Roberts}.  Early work by McKean, \cite{McKean}, focused on understanding the statistics of the rightmost particle of branching Brownian motion with neither drift nor absorption in order to understand solutions to the Fisher-KPP equation.   We describe in more detail the connection between this work and Fisher-KPP at the conclusion of this section.  In 1978, Kesten introduced drift and absorption at the origin into the model~in \cite{Kesten}.  In this work, he showed that $\dot{X} = 2$ is the critical drift, separating systems which die out with probability one from systems with positive survival probability.  Recently, more precise results have been obtained regarding the distribution of particles at or near the critical drift and the convergence of the statistics of the rightmost particle; see, for example, \cite{BrunetDerrida, ABBS, Harris, HarrisHarris, HarrisHarrisKyprianou, BBS11, BBS13, Maillard}.    The preceding is an incomplete bibliography, and the interested reader should investigate the references within the works referenced above.

Our perspective is slightly different.  We wish to understand the effect of lower order terms in expansion of the drift on the average number of surviving particles.  To be more explicit, let $N_t$ be the number of particles at time $t$ with the position of the $i$th particle given by $Y_t^i$, and define
\[v(t,x) = \E^x\left[ \sum_{i=1}^{N_t} v_0\left(Y_t^i\right)\right],\]
for some compactly supported function $v_0$.  Then $v$ solves the equation
\begin{equation}\label{e_linearized_kpp}
\begin{cases}
	v_t = v_{xx} + v,\\
	v(t,X(t)) = 0,\\
	v(0,x) = v_0(x),
\end{cases}
\end{equation}
see, e.g. the ``many-to-one'' lemma~\cite{HarrisHarrisKyprianou, HardyHarris}.  In particular, if $v_0$ is the indicator function of $[a,b]$, this is simply the expected number of particles in $[a,b]$.  In~\cite{HNRR13,HNRR12}, the authors, using \eqref{e_linearized_kpp} as an approximation for Fisher-KPP, show that, letting
\[
	\dot{X} = 2 + \frac{r}{t},
\]
the only choice for $r$ which yields non-trivial long-time behavior is $-3/2$.  We obtain a correction of order $t^{-3/2}$ which gives us more precise information on the average number of particles by a refinement of their methods and using a connection between the mass of the solution of \eqref{e_linearized_kpp}, $v$, with the normal derivative at the origin, $v_x(0)$.  This allows us to find an expansion for $\dot{X}$, independent of initial data, which yields faster convergence to the limiting mass.  We now state this precisely.

\subsection*{Statement of Results}

Before we state the main theorem, we give a bit of notation and recast our problem.  For ease of exposition, we use the following notation in order to omit tracking multiplicative constants that arise.
\begin{nota}
For two values $a$ and $b$, which may depend on time and various other data, we write
\[
	a \lesssim b,
\]
if there is some multiplicative constant, $C>0$, that is independent of time such that
\[
	a \leq Cb.
\]
Such a constant may depend on initial data or various other constants.
\end{nota}
\noindent We note that, despite this notation, we occasionally need to introduce an arbitrary constant into our equations.  Whenever we do, we denote by $C$ such an arbitrary constant which may change line by line, but which is independent of time.

In addition, in order to avoid the complications inherent in a moving boundary, we shift to a moving frame to obtain the equation
\begin{equation}\begin{cases} \label{e_shifted_linearized_kpp}
	v_t - \dot{X}(t)v_x  = v_{xx} + v,\\
	v(t,0) = 0,\\
	v(0,x) = v_0(x),
.\end{cases}\end{equation}

\begin{thm}\label{p_theorem}
Let $v$ satisfy \eqref{e_shifted_linearized_kpp} with $X(t)$ given by
\begin{equation}\label{e_front_location}
	X(t) = 2(t+1) - \frac{3}{2}\log(t+1) - \frac{\cbar}{\sqrt{t+1}}
.\end{equation}
Then there exists $\alpha_0\geq 0$ such that for any $p\in [1,\infty]$ we have
\begin{equation}\label{e_lp_convergence}
\lim_{t\to\infty} \|v(t) - \alpha_0 x e^{-x}\|_p = 0
.\end{equation}
In addition, we have that if $\cbar = 3\sqrt{\pi}$, then
\begin{equation}\label{e_fast_convergence}
	\left|\alpha_0 - \int_{0}^\infty v(t,x) dx\right| \lesssim \frac{\log(t)}{t}.
\end{equation}
Otherwise, we have that
\begin{equation}\label{e_slow_convergence}
	\frac{1}{\sqrt{t}} \lesssim \left|\alpha_0 - \int_{0}^\infty v(t,x) dx\right|
		\lesssim \frac{1}{\sqrt{t}}.
\end{equation}
Finally, for certain initial data, we have $\alpha_0>0$.
\end{thm}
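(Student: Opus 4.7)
My plan is to reduce the equation to a perturbed heat equation on the half-line, then analyze it with self-similar variables and a spectral decomposition. First, set $v(t,x) = e^{-x}w(t,x)$, which eliminates the exponential prefactor from the critical drift $2$ and yields
\[
w_t = w_{xx} + \delta(t)(w_x - w), \qquad w(t,0) = 0, \qquad \delta(t) := -\tfrac{3}{2(t+1)} + \tfrac{\cbar}{2(t+1)^{3/2}}.
\]
Then write $w(t,x) = P(t)u(t,x)$ with $P'/P = -\delta$, so $P(t) = e^{-\cbar}(t+1)^{3/2}e^{\cbar/\sqrt{t+1}}$. This absorbs the $-\delta w$ term and reduces the equation to $u_t = u_{xx} + \delta(t)u_x$ on $(0,\infty)$ with $u(t,0) = 0$. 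Passing to self-similar coordinates $y = x/\sqrt{t+1}$, $\tau = \log(t+1)$, and $u(t,x) = (t+1)^{-1}F(\tau,y)$ transforms this to
\[
F_\tau = \mathcal{L}F + \epsilon(\tau) F_y, \qquad \mathcal{L} := \partial_y^2 + \tfrac{y}{2}\partial_y + 1, \qquad \epsilon(\tau) := -\tfrac{3}{2}e^{-\tau/2} + \tfrac{\cbar}{2}e^{-\tau},
\]
with $F(\tau,0) = 0$.

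On the weighted space $L^2((0,\infty), e^{y^2/4}\,dy)$ with Dirichlet at $0$, $\mathcal{L}$ is self-adjoint; the substitution $F = e^{-y^2/4}G$ converts the eigenvalue problem to the half-line Hermite equation, yielding eigenvalues $\{0, -1, -2, \ldots\}$ with kernel spanned by $\psi_0(y) := ye^{-y^2/4}$. I decompose $F(\tau,\cdot) = c(\tau)\psi_0 + R(\tau,\cdot)$ with $R \perp \psi_0$, producing the coupled system
\[
c'(\tau) = \epsilon(\tau)\,\frac{\langle F_y,\psi_0\rangle}{\|\psi_0\|^2}, \qquad R_\tau = \mathcal{L}R + \epsilon(\tau)[c\psi_0' + R_y] - c'(\tau)\psi_0.
\]
An energy estimate in the weighted norm combined with the spectral gap gives $\|R(\tau)\| \lesssim e^{-\tau/2}$, and the explicit moment $\langle\psi_0',\psi_0\rangle/\|\psi_0\|^2 = -1/\sqrt{\pi}$ (from $\int_0^\infty y^2 e^{-y^2/4}\,dy = 2\sqrt{\pi}$ and $\int_0^\infty y(1 - y^2/2)e^{-y^2/4}\,dy = -2$) yields $c'(\tau) \sim -\epsilon(\tau)c/\sqrt{\pi}$, so $c(\tau) \to \alpha$ at rate $O(e^{-\tau/2})$. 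Unwinding the substitutions shows $v(t,x) \to e^{-\cbar}\alpha \cdot xe^{-x} =: \alpha_0 \cdot xe^{-x}$ locally uniformly; standard parabolic estimates and Gaussian tail control then upgrade this to \eqref{e_lp_convergence} for all $p \in [1,\infty]$.

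For the mass rate, integrating the PDE for $v$ against $1$ yields the clean identity $\tfrac{d}{dt}\int_0^\infty v\,dx = \int_0^\infty v\,dx - v_x(t,0)$, whose decaying solution is
\[
\alpha_0 - \int_0^\infty v(t,x)\,dx = \int_t^\infty e^{t-s}(\alpha_0 - v_x(s,0))\,ds,
\]
reducing the mass rate to a Laplace convolution of the rate $v_x(t,0) \to \alpha_0$. Since $v_x(t,0) = P(t)(t+1)^{-3/2}F_y(\tau,0)$, expanding both factors gives $P(t)(t+1)^{-3/2} = e^{-\cbar}(1 + \cbar(t+1)^{-1/2} + O(1/t))$ and $F_y(\tau,0) = \alpha - \tfrac{3\alpha}{\sqrt{\pi}}(t+1)^{-1/2} + \rho'(0)(t+1)^{-1/2} + O(1/t)$, where $\rho$ is the leading self-similar profile of $R$, determined by the resolvent equation $(\mathcal{L} + \tfrac{1}{2})\rho = \tfrac{3\alpha}{2}\psi_0'$ projected onto $\psi_0^\perp$. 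The coefficient of $(t+1)^{-1/2}$ in $v_x(t,0) - \alpha_0$ is then $e^{-\cbar}[\alpha\cbar - \tfrac{3\alpha}{\sqrt{\pi}} + \rho'(0)]$; an explicit Hermite-moment computation of $\rho'(0)$ shows this vanishes precisely when $\cbar = 3\sqrt{\pi}$. In that critical case, the next-order forcing (of order $e^{-\tau}$) is resonant with the second eigenvalue $-1$ of $\mathcal{L}$, producing a $\tau e^{-\tau}$ response that yields the logarithmic factor in \eqref{e_fast_convergence}; otherwise the $(t+1)^{-1/2}$ coefficient survives, and Laplace convolution gives \eqref{e_slow_convergence}.

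Finally, $\alpha_0$ depends linearly on the initial data through $\alpha = \langle F(0,\cdot),\psi_0\rangle/\|\psi_0\|^2$, which unwinds through the sequence of substitutions to a positive multiple of $\int_0^\infty y e^y v_0(y)\,dy$; this is strictly positive for any nonnegative, nontrivial $v_0$ satisfying $v_0(0) = 0$, so $\alpha_0 > 0$ for such data (nonnegativity is preserved by the comparison principle). The principal obstacle will be the precise identification of $\cbar = 3\sqrt{\pi}$ and the emergence of the logarithm in the critical case: the former requires solving the resolvent equation for $\rho$ and computing $\rho'(0)$ via explicit Hermite-moment integrals in the weighted space, while the latter requires expanding the dynamics of $R$ to the second harmonic and exploiting the precise integer spacing of the spectrum of $\mathcal{L}$.
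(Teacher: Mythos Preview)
Your approach is essentially the paper's: the paper also removes the factor $e^{-x}$, passes to self-similar variables $\tau=\log(t+1)$, $y=x/\sqrt{t+1}$, and conjugates by $e^{y^2/8}$ to reach a harmonic-oscillator operator (your weighted-$L^2$ setup is the same operator without the conjugation); it then uses exactly your mass ODE $\frac{d}{dt}\int v = \int v - v_x(t,0)$ to reduce the mass rate to the rate of $v_x(t,0)\to\alpha_0$, and proves the analogue of your decomposition $F=\alpha\psi_0+e^{-\tau/2}(\text{corrector})+O(\tau e^{-\tau})$, with the corrector's derivative at $0$ vanishing iff $\cbar=3\sqrt{\pi}$ and the resonance at eigenvalue $-1$ producing the logarithm. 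The only cosmetic differences are your extra prefactor $P(t)$ (which shifts the $\cbar$-dependence from the corrector equation into the expansion of $P(t)(t+1)^{-3/2}$) and working in weighted $L^2$ rather than conjugating.

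One genuine slip: in your positivity argument you write $\alpha=\langle F(0,\cdot),\psi_0\rangle/\|\psi_0\|^2$, but that is $c(0)$, not $\alpha=\lim_{\tau\to\infty}c(\tau)$; the two differ by $\int_0^\infty c'(\tau)\,d\tau$, which is not obviously dominated. The paper accordingly claims only that $\alpha_0>0$ when $\int_0^\infty y e^{y}v_0(y)\,dy$ is sufficiently large, which is all the theorem asserts. Also, the step you describe as ``an explicit Hermite-moment computation of $\rho'(0)$'' is exactly where the paper must solve the corrector ODE in closed form (via Kummer/hypergeometric functions, following Ebert--van~Saarloos) to read off the derivative at the origin; this is not a moment identity against eigenfunctions but an explicit ODE solution, so be prepared for that computation.
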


The main focus of this paper is in obtaining the mass stabilization rates in \eqref{e_fast_convergence} and \eqref{e_slow_convergence}.  We state the convergence in $L^p$ and the positivity of $\alpha_0$ in order to reassure the skeptical reader that these rates have meaning.  The $L^p$ convergence and the positivity of $\alpha_0$ arises naturally in our work, and we make note when they become apparent.

Our proof of~\eqref{e_fast_convergence} in Theorem~\ref{p_theorem} involves no direct estimates on the mass of the solution.  In systems such as~\eqref{e_shifted_linearized_kpp}, there is a close relationship between the derivative of the solution at $x=0$ and its total mass.  Hence, we focus on obtaining estimates on the derivative of the solution at the origin.  To this end, the main ingredient of the proof of Theorem \ref{p_theorem} is the following lemma.
\begin{lem}\label{p_main_lemma}
Let $v$ satisfy \eqref{e_shifted_linearized_kpp} with $X(t)$ given by~\eqref{e_front_location}.  If $\cbar = 3\sqrt{\pi}$ we have
\[|v_x(0) - \alpha_0| \lesssim \frac{\log(t)}{t}.\]
On the other hand, if $\cbar \neq 3\sqrt{\pi}$, we have that, for $t$ sufficiently large
\[\frac{1}{\sqrt{t}} \lesssim |v_x(0) - \alpha_0| \lesssim \frac{1}{\sqrt{t}}.\]
\end{lem}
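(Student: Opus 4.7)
The plan is to reduce to a self-similar problem via two conjugations, then push an asymptotic expansion far enough to see the critical role of $\cbar=3\sqrt{\pi}$. First, set $v(t,x)=e^{-x}u(t,x)$, which converts \eqref{e_shifted_linearized_kpp} to $u_t=u_{xx}+\delta u_x-\delta u$ with $u(t,0)=0$, where $\delta:=\dot X-2=-\tfrac{3}{2(t+1)}+\tfrac{\cbar}{2(t+1)^{3/2}}$; note that $v_x(t,0)=u_x(t,0)$. Next, write $u(t,x)=p(t)\tilde u(t,x)$ with $p$ chosen so that $p'/p=-\delta$, giving $p(t)=(t+1)^{3/2}\exp(-\cbar+\cbar(t+1)^{-1/2})$ and the equation $\tilde u_t=\tilde u_{xx}+\delta\tilde u_x$. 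Finally, pass to self-similar variables $s=\log(t+1)$, $y=x/\sqrt{t+1}$, $\tilde U(s,y)=\tilde u(t,y\sqrt{t+1})$; a direct calculation yields
\[
    \tilde U_s=\tilde U_{yy}+\tfrac{y}{2}\tilde U_y-\tfrac{3}{2}e^{-s/2}\tilde U_y+\tfrac{\cbar}{2}e^{-s}\tilde U_y,\qquad \tilde U(s,0)=0,
\]
together with the identity $v_x(t,0)=(t+1)\,e^{-\cbar+\cbar e^{-s/2}}\,\tilde U_y(s,0)$.

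Second, I would analyze the limit operator $L:=\partial_y^2+\tfrac{y}{2}\partial_y$ on $[0,\infty)$ with Dirichlet boundary condition, which is self-adjoint in $L^2(\mathbb{R}_+,e^{y^2/4}\,dy)$; its Dirichlet eigenvalues are $-1,-2,-3,\ldots$, and the first two eigenfunctions are $\psi_1(y):=ye^{-y^2/4}$ (eigenvalue $-1$) and $\psi_2(y):=(y^3-6y)e^{-y^2/4}$ (eigenvalue $-2$). For initial data with compact support one expects $\tilde U(s,y)\sim c_1 e^{-s}\psi_1(y)$ for some constant $c_1$ (obtained by pairing the initial data against the dual eigenfunction), so I substitute the ansatz $\tilde U=e^{-s}U^{(0)}+e^{-3s/2}U^{(1)}+\cdots$ and match orders. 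The $O(e^{-s})$ balance forces $U^{(0)}=c_1\psi_1$, while the $O(e^{-3s/2})$ balance yields
\[
    LU^{(1)}+\tfrac{3}{2}U^{(1)}=\tfrac{3}{2}(U^{(0)})'=\tfrac{3c_1}{2}(1-y^2/2)e^{-y^2/4},\qquad U^{(1)}(0)=0.
\]
The right-hand side is proportional to the homogeneous solution $(y^2/2-1)e^{-y^2/4}$ of $L+\tfrac{3}{2}$, which fails the Dirichlet condition, so the boundary condition together with the requirement that $U^{(1)}$ remain bounded (i.e., exclude the exponentially growing second homogeneous solution, obtained by reduction of order with Wronskian $e^{y^2/4}$) uniquely determines $U^{(1)}$; the explicit Gaussian integral $\int_0^\infty(y^2/2-1)^2 e^{-y^2/4}\,dy=2\sqrt{\pi}$ then yields $(U^{(1)})'(0)=-3c_1\sqrt{\pi}$.

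Substituting these expansions into the identity for $v_x(t,0)$ and collecting produces
\[
    v_x(t,0)=c_1 e^{-\cbar}+c_1 e^{-\cbar}(\cbar-3\sqrt{\pi})\,e^{-s/2}+O(s\,e^{-s}),
\]
so $\alpha_0=c_1 e^{-\cbar}$, and the coefficient of the $(t+1)^{-1/2}$ term is exactly $\alpha_0(\cbar-3\sqrt{\pi})$, vanishing precisely at $\cbar=3\sqrt{\pi}$. In that critical case the remaining correction is at order $e^{-s}$, and the $\log$ factor enters through a resonance at the eigenvalue $-2$ of $L$: the forcing in the matching equation for $U^{(2)}$ contains a component along $\psi_2$, so the expansion must be augmented by an $s\,e^{-2s}\psi_2$ term, which multiplied by $(t+1)=e^s$ contributes precisely $\log(t+1)/(t+1)$ to $v_x(t,0)$. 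The hard part of the proof will be making this formal scheme rigorous: controlling the remainder $\tilde U-e^{-s}U^{(0)}-e^{-3s/2}U^{(1)}$ and its boundary derivative at $y=0$ uniformly in $s$ via energy or Duhamel estimates in the weighted $L^2(e^{y^2/4}\,dy)$ norm, treating the $e^{-s/2}$ and $e^{-s}$ forcing terms perturbatively; the sharp lower bound when $\cbar\neq 3\sqrt{\pi}$ will additionally require verifying $c_1\neq 0$ for the class of initial data under consideration.
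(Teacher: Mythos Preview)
Your approach is essentially the same as the paper's: pass to self-similar variables, identify the limit operator and its Dirichlet spectrum, peel off the leading and next-to-leading terms, and control the remainder by energy estimates using the spectral gap. The paper conjugates all the way to the harmonic oscillator form $M=-\partial_y^2+(y^2/16-3/4)$ (eigenvalues $0,1,2,\dots$) and writes $W=\alpha\,y e^{-y^2/8}+e^{-\tau/2}g(y)+R$, whereas you keep the Fokker--Planck form $L=\partial_y^2+\tfrac{y}{2}\partial_y$ and absorb the zeroth-order term into the scalar prefactor $p(t)$, so that $\cbar$ reappears only when you recombine $p$ with $\tilde U_y(s,0)$; these are equivalent packagings of the same expansion, and your identification of the $\log(t)/t$ term with the resonance at the second Dirichlet eigenvalue matches the paper's $\tau e^{-\tau}$ bound on $R$.

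The one genuinely different ingredient is how the constant $3\sqrt{\pi}$ is extracted. The paper solves the second-order equation for $g$ explicitly in Kummer/hypergeometric functions, following Ebert--van~Saarloos, and reads off $g_y(0)$ from the explicit formula. You instead use the Lagrange identity: pairing $(L+\tfrac32)U^{(1)}=-\tfrac{3c_1}{2}h$ against the non-Dirichlet homogeneous solution $h(y)=(\tfrac{y^2}{2}-1)e^{-y^2/4}$ in $L^2(e^{y^2/4}\,dy)$ turns $(U^{(1)})'(0)$ into the boundary term, and a single Gaussian integral $\int_0^\infty(\tfrac{y^2}{2}-1)^2e^{-y^2/4}\,dy=2\sqrt{\pi}$ gives $(U^{(1)})'(0)=-3c_1\sqrt{\pi}$. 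This is cleaner and avoids the special-function machinery; the paper's explicit solution, on the other hand, gives the full profile $g(y)$, not just its slope at the origin. The remainder estimates you allude to (weighted energy/Duhamel bounds exploiting the spectral gap) are exactly what the paper carries out in its Lemmas~\ref{p_decomposition_0th_order} and~\ref{p_decomposition_R}.
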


\subsection*{Connection With Front Speeds in Fisher-KPP}
Much of the renewed interest in understanding the system \eqref{e_linearized_kpp} and its probabilistic counterpart, branching Brownian motion with drift and absorption, lies in its connection to the Fisher-KPP equation
\begin{equation}\label{e_kpp}\begin{cases}
&u_t = u_{xx} + u(1-u),\\
&u(0,x) = u_0(x),
\end{cases}\end{equation}
where $u_0$ is some localized smooth function on $\R$ taking values in $[0,1]$.  This equation usually arises as a model for population dynamics, see e.g. \cite{Fife, Fisher, Murray, Xin09}.

The equation \eqref{e_kpp} was originally studied in the early twentieth century in \cite{Fisher, KPP}, and it was observed that traveling wave solutions to the system exist.  To be more explicit, there are global in time solutions of the form $u(t,x) = \phi_c(x-ct)$ for profiles $\phi_c$ and speeds $c\geq 2$.
Later, in \cite{AronsonWeinberger}, Aronson and Weinberger showed that solutions with more general initial data spread at the speed of the traveling wave and admit the same behavior, in that the steady state $u\equiv 1$ invades the unsteady state $u \equiv 0$.  Namely, given a solution to \eqref{e_kpp} where $u_0$ is compactly supported, non-negative, and non-zero, we have
\[
	\min_{|x| \leq ct} u(x) = 1, ~~\text{ for all } c < 2,
\]
and
\[
	\max_{|x| \geq ct} u(x) = 0, ~~\text{ for all } c > 2.
\]
In this case, as in Kesten's paper \cite{Kesten}, the critical speed is $2$.

In the celebrated papers \cite{Bramson78, Bramson83}, Bramson obtained more precise asymptotics of the front location with probabilistic methods.  More specifically, he showed that for any $m\in (0,1)$, there exists a shift $x_m$, depending on the initial conditions and $m$, such that
\[
	\{x \in \R: u(t,x) = m\} \subset \left[ 2t - \frac{3}{2}\log(t) - x_m - o(1), 2t - \frac{3}{2}\log(t) - x_m + o(1)\right].
\]
Recently, Roberts simplified the proofs of these results~\cite{Roberts}.  Ebert and van Saarloos, in \cite{EbertVanSaarloos}, obtained, through non-rigorous methods, the next term in the expansion.  Namely, using matched asymptotics, they argue that the front speed is given by
\begin{equation}\label{e_front_speed}
	2t - \frac{3}{2}\log(t) - x_m - \frac{3\sqrt{\pi}}{\sqrt{t}} + O(t^{-1}).
\end{equation}
Interestingly, though the constant term has dependence on the initial data, the lower order term is universal.  We point out that the expansion obtained by Ebert and van Saarloos is the same as the expansion we obtain in~\eqref{e_front_location} through Theorem \ref{p_theorem}.  In fact, part of the motivation of our work has been to provide some understanding of the $3\sqrt{\pi}$ term, as the Ebert and van Saarloos paper does not provide any interpretation beyond the matched asymptotics in the formal derivation.  The most recent works regarding the front location in Fisher-KPP  are~\cite{HNRR13, HNRR12} by Hamel et al.  In these papers the authors used PDE methods, which we have borrowed and expanded on here, in order to obtain results similar to those of Bramson at the expense of the precision in the constant term.  In addition, we mention a work in preparation,~\cite{BerestyckiBrunetHarris}, in which the authors, through probabilistic methods, investigate the value of $\cbar$ given in~\eqref{e_front_speed} in the Fisher-KPP context.

That our expansion for the critical drift in \eqref{e_linearized_kpp} is the same as Ebert and van Saarloos's expansion for the front location for \eqref{e_kpp} is not a surprise.  Solutions to \eqref{e_linearized_kpp} provide a convenient family of sub and super solutions to \eqref{e_kpp}, which are exceptionally faithful approximations of the tail of solutions to \eqref{e_kpp} provided that $X(t)$ is chosen carefully.  As we mentioned above, the authors in \cite{HNRR13, HNRR12, NRR}, use equations such as \eqref{e_linearized_kpp} in order to obtain precise results about the front position in Fisher-KPP.  In addition, we remind the reader that solutions of \eqref{e_linearized_kpp} and \eqref{e_kpp} are connected through their interpretation as statistics of branching Brownian motion \cite{McKean}.  The close relationship between these two probabilistic models has been leveraged to transfer understanding from one system to the other, see e.g. \cite{FangZeitouni, HNRR12, MaillardZeitouni} and many of the papers mentioned above.


\subsection*{Outline of the Paper}

The paper is organized as follows.  In Section \ref{s_theorem_proof}, we show how the mass stabilization rates given in Theorem \ref{p_theorem} follow from Lemma \ref{p_main_lemma}.  This boils down to leveraging a connection between the total mass of a solution to \eqref{e_shifted_linearized_kpp} and its derivative as $x = 0$ to obtain an ODE governing the total mass.  Then we apply the estimates provided by Lemma \ref{p_main_lemma} to conclude.

In Section \ref{s_self_similar}, we switch to self-similar variables in order to reduce the equation to a simple parabolic PDE with a decaying forcing term.  This change of variables also used in~\cite{HNRR13,HNRR12} and is convenient because it changes \eqref{e_shifted_linearized_kpp} into a PDE that has a discrete spectrum which we know explicitly.  This allows us to decompose our solution into three parts: a steady state, a slowly decaying function with vanishing normal derivative, and a rapidly decaying remainder.  This reduces Lemma \ref{p_main_lemma} to proving this decomposition.  We note that the convergence of $v$ to $\alpha_0 x e^{-x}$ is an easy consequence of this, and, as such, we omit the proof of it.

Finally, in Section \ref{s_lemma_proof}, we give the proof of this decomposition by explicitly solving for the steady state and the slowly decaying function and by obtaining estimates for the quickly decaying remainder. 
At some point in the analysis, the positivity of $\alpha_0$ becomes obvious; we share when this happens.

\smallskip\smallskip
\noindent\textbf{Acknowledgements: } 
The author would like to thank Lenya Ryzhik for suggesting the project and \'Eric Brunet for his lectures at the Banff workshop on deterministic and stochastic front propagation in 2010.

\section{Deducing Mass Stabilization Rates From Lemma \ref{p_main_lemma}}\label{s_theorem_proof}
\begin{proof}[Proof of \eqref{e_fast_convergence} and \eqref{e_slow_convergence}:]
We first cover the case where $\cbar = 3\sqrt{\pi}$.  Let $\alpha_0$ be the constant from Lemma \ref{p_main_lemma}.  Integrating equation \eqref{e_shifted_linearized_kpp} and defining
\[M(t) = \int_0^\infty v(t,x)dx - \alpha_0,\]
we obtain
\[
M'(t) - M = - v_x(0) + \alpha_0
.\]
Define $E(t) = e^{-t}M(t)$, and notice that $E'(t) = (M' - M)e^{-t}$.  Hence we have that
\[ E'(t) = (\alpha_0 - v_x(0))e^{-t}.\]
Integrating both sides of this and applying Lemma \ref{p_main_lemma}, we get
\[\begin{split}
	|M(t)|e^{-t}
		&= \left| \int_t^\infty E'(s)ds\right|
		= \left| \int_t^\infty (\alpha_0 - v_x(0))e^{-s}ds\right|\\
		&\lesssim \int_t^\infty \frac{\log(s)}{s}  e^{-s} ds
		\leq \frac{\log(t)}{t} \int_t^\infty e^{-s}ds
		= \frac{\log(t)}{t} e^{-t}
.\end{split}\]
Multiplying both sides by the exponential finishes the upper bound for this choice of $\cbar$.

If $\cbar \neq 3\sqrt\pi$, we may apply Lemma \ref{p_main_lemma} in the same manner to get the desired upper bound in the statement of the proof.  In order to obtain the lower bound, let $M$ and $E$ be as above.  For $t$ sufficiently large, we may apply Lemma \ref{p_main_lemma} to obtain that
\[\begin{split}
	|M(t)| e^{-t}
		&= \left| \int_t^\infty E'(s) ds\right|
		\gtrsim \int_t^\infty \frac{e^{-s}}{\sqrt{s}}ds\\
		&\geq \frac{1}{\sqrt{2t}} \int_t^{2t} e^{-s}ds
		\gtrsim \frac{e^{-t} - e^{-2t}}{\sqrt{t}}.
\end{split}\]
Multiplying both sides by $e^{t}$ and taking $t$ sufficiently large, we obtain the desired lower bound.  This finishes the proof.
\end{proof}

\section{Self-Similar Variables}\label{s_self_similar}

In order to understand the solutions to \eqref{e_shifted_linearized_kpp}, we change variables a number of times following the development in \cite{HNRR13,HNRR12}.  First, we remove an exponential to obtain the function $\overline{v} = e^x v$ satisfying
\begin{equation}
\overline{v}_t + \left(\frac{3}{2t} - \frac{\cbar}{2t^{3/2}}\right)\overline{v}_x = \overline{v}_{xx} + \left(\frac{3}{2t} - \frac{\cbar}{2t^{3/2}}\right)\overline{v}.
\end{equation}
Changing to self-similar variables $\tau = \log(1+t)$ and $y = x(1+t)^{-1/2}$, we obtain
\begin{equation}
w_\tau - \frac{y}{2} w_y - w_{yy} - \frac{3}{2}w =  \left(\frac{\cbar}{2e^{\tau}}-\frac{3}{2e^{\tau/2}}\right) w_y  - \frac{\cbar}{2e^{\tau/2}} w.
\end{equation}
Let $W(\tau,y) = e^{\tau/2} e^{y^2/8} w(\tau,y)$ and we get
\begin{equation}\label{e_pde_changed_var}
W_\tau + MW = \left(\frac{\cbar}{2e^{\tau}} - \frac{3}{2e^{\tau/2}}\right)\left(W_y - \frac{y}{4}W \right) - \frac{\cbar}{2e^{\tau/2}} W,
\end{equation}
where
\[M = -\partial_y^2 + \left(\frac{y^2}{16} - \frac{3}{4}\right).\]
 We work mainly with $W$ in the sequel.  Notice that the operator on the left hand side of~\eqref{e_pde_changed_var} is the equation of a simple harmonic oscillator with a decaying forcing term.  We use the fact that we understand the eigenvalues and eigenfunctions of this operator in the analysis that follows.

In these coordinates, Lemma \ref{p_main_lemma} can be reduced to proving the following decomposition.
\begin{lem}\label{p_decomposition}
Let $W$ satisfy \eqref{e_pde_changed_var} with smooth, compactly supported initial data.  Then there exists $\alpha$, $g$, and $R$ such that
\begin{equation}\label{e_decomposition}
W(\tau,y)
	=\alpha y e^{-y^2/8} + e^{-\tau/2} g(y) + R(\tau, y)
.\end{equation}
In addition, $|R_y(0)| \lesssim \tau e^{-\tau}$. Finally, $g_y(0) = 0$ if and only if $\cbar = 3\sqrt{\pi}$
\end{lem}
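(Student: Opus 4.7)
The approach will exploit the spectral structure of $M = -\partial_y^2 + y^2/16 - 3/4$. On $L^2(0,\infty)$ with Dirichlet boundary condition at $y = 0$, $M$ is self-adjoint with discrete spectrum $\{0, 1, 2, \dots\}$, the eigenfunctions being the odd Hermite-type functions vanishing at the origin; in particular the zero-eigenfunction is $\psi_1(y) := y e^{-y^2/8}$. This motivates the ansatz $W(\tau,y) = \alpha\, \psi_1(y) + e^{-\tau/2}\, g(y) + R(\tau, y)$. Substituting into \eqref{e_pde_changed_var}, the term $\alpha\psi_1$ is annihilated by $\partial_\tau + M$, and matching the $e^{-\tau/2}$ order contributions forces the stationary ODE
\[
	\bigl(M - \tfrac12\bigr) g \;=\; -\frac{3\alpha}{2}\Bigl(1 - \frac{y^2}{2}\Bigr)e^{-y^2/8} \;-\; \frac{\cbar\, \alpha}{2}\, y\, e^{-y^2/8} \qquad\text{on } (0,\infty),
\]
with $g(0) = 0$ and Gaussian decay. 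Since $1/2$ lies outside the half-line spectrum of $M$, this ODE admits a unique smooth, Gaussian-decaying solution.

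To pin down $g_y(0)$ without solving the ODE explicitly, I will use that $u(y) := (y^2 - 2) e^{-y^2/8}$ is an even solution of $(M - \tfrac12)u = 0$ on the full line, with $u(0) = -2$ and $u_y(0) = 0$. Pairing the ODE for $g$ with $u$ and integrating by parts on $(0, \infty)$, self-adjointness eliminates the bulk integral, and the boundary contribution at $y=0$ collapses to $-2\, g_y(0)$ (using $g(0) = 0$ and $u_y(0) = 0$). Computing the Gaussian moments
\[
	\int_0^\infty \Bigl(1 - \frac{y^2}{2}\Bigr)(y^2 - 2)\, e^{-y^2/4}\, dy = -4\sqrt{\pi}, \qquad \int_0^\infty y(y^2 - 2)\, e^{-y^2/4}\, dy = 4,
\]
this gives $g_y(0) = \alpha\,(\cbar - 3\sqrt{\pi})$. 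Hence $g_y(0) = 0$ if and only if $\cbar = 3\sqrt{\pi}$ (assuming $\alpha \neq 0$, which is the case for the generic initial data under consideration).

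The remaining, and main, task is the bound $|R_y(0)| \lesssim \tau e^{-\tau}$. With $g$ fixed, $R$ solves $R_\tau + MR = F(\tau, y)$ with $R(\tau,0) = 0$, where $F$ collects $O(e^{-\tau})$ explicit sources (namely the $\cbar/(2e^\tau)$ term applied to $\alpha\psi_1$, the $e^{-\tau}$ pieces produced when the right-hand side of \eqref{e_pde_changed_var} acts on $e^{-\tau/2} g$, and higher-order residuals) together with $R$-dependent perturbations of lower order. I will fix $\alpha$ so that the zero-mode component of $R$ vanishes at infinity, concretely by setting $\alpha = \lim_{\tau \to \infty} \langle W(\tau,\cdot), \psi_1\rangle/\|\psi_1\|^2$, whose existence emerges from integrating the scalar ODE for the $\psi_1$-projection of $W$. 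Decomposing $R$ in the odd Hermite eigenbasis, I expect the hardest point to be the resonance at eigenvalue $1$: the associated coefficient $a$ satisfies $\dot a + a = O(e^{-\tau})$, whose solution picks up an additional factor of $\tau$, giving $a(\tau) = O(\tau e^{-\tau})$, while all other modes decay at least like $e^{-\tau}$. Promoting these mode-wise bounds to the pointwise estimate $|R_y(0)| \lesssim \tau e^{-\tau}$, for instance via parabolic regularity applied to the resulting $L^2$ bound on $R$, or by direct summation using the known values of $\psi_{2k+1}'(0)$, will then conclude the proof.
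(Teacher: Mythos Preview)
Your proposal is correct, and the overall architecture---extract $\alpha$ as the $e_0$-projection of $W$ at infinity, solve the stationary $(M-\tfrac12)g$ equation, then close energy estimates on $R$ using the spectral gap and parabolic regularity for $R_y(0)$---matches the paper's proof (its Lemmas~\ref{p_decomposition_0th_order}, \ref{p_decomposition_g}, and \ref{p_decomposition_R}).

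The one genuinely different step is how you identify $g_y(0)$. The paper, following Ebert--van Saarloos, solves~\eqref{e_g} explicitly: after the substitution $z=y^2/4$, $G=e^{z/2}g$, it writes $G$ in terms of the confluent-hypergeometric-type series $F_2$ and $H$ of~\eqref{e_hypergeometric}, fixes the free constants by killing the $z^{-3/2}e^z$ growth at infinity and enforcing $G(0)=0$, and then reads off $g_y(0)$ from the resulting formula. Your pairing trick is cleaner: you observe that $u(y)=(y^2-2)e^{-y^2/8}$ is the \emph{even} null vector of $M-\tfrac12$, so integrating $(M-\tfrac12)g$ against $u$ on $(0,\infty)$ leaves only the boundary term $-u(0)g_y(0)=2g_y(0)$, and two Gaussian moments give $g_y(0)=\alpha(\cbar-3\sqrt\pi)$ directly. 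This avoids the special-function machinery entirely and makes the dependence on $\cbar$ transparent; the price is that you never obtain $g$ itself (which is harmless here, since only $g_y(0)$ and qualitative smoothness/decay of $g$ are used downstream). For the remainder, your mode-by-mode picture---the resonance at eigenvalue $1$ producing the extra $\tau$---is exactly what underlies the paper's energy inequality $\frac{d}{d\tau}\|\tilde R\|_2^2 + (2-Ce^{-\tau/2})\|\tilde R\|_2^2 \lesssim e^{-\tau}\|\tilde R\|_2$, so the two arguments are equivalent. (Both your argument and the paper's need $\alpha\neq 0$ for the ``only if'' direction; this is implicit in each.)
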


We prove this lemma in the sequel by decomposing $W$ into three functions: one part is the steady solution of the equation, one is a slowly decaying function with zero derivative at $z = 0$, and one is a quickly decaying function.  First, we show that $W$ is bounded in $L^2$ and converges to the steady state at the rate $e^{-\tau/2}$.  This also shows the existence of $\alpha$ above.  Then we prove the existence of $g$ and use the fact that we may solve for it explicitly.  Finally, we leverage these facts to prove the existence of $R$.

%

\section{Proof of Lemma \ref{p_decomposition}}\label{s_lemma_proof}

Before we begin the proof, notice that $M$ defines a non-negative definite, symmetric quadratic form on the space
\begin{equation}\label{e_X}
X := H^1\cap\{\phi \in L^2: y\phi \in L^2\}
,\end{equation}
which we call $Q$.  Namely, for all $\phi \in X$, we define
\begin{equation}\label{e_Q}
Q(\phi) := \int \phi (M\phi) dy.
\end{equation}
Note that $Q$ satisfies the following inequality
\begin{equation}\label{e_q_split}
\int \frac{y}{4} \phi^2 dy \leq Q(\phi) + \|\phi\|_2^2
.\end{equation}
We use this inequality often in the sequel.

Let $e_0, e_1, \dots$ denote the eigenfunctions of $M$ and we know by \cite{HNRR13} that the first two eigenvalues are $0$ and $1$.  Hence $Q$ is non-negative on $X$ and $Q(\phi) \geq \|\phi\|_2^2$ on $\Span\{e_1, e_2, \dots\}$.  Moreover, we know that
\[e_0(y) = \frac{1}{\sqrt{2\sqrt{\pi}}} y\e.\]

First, we show that $W$ converges in $L^2$ to the steady state at the rate $e^{-\tau/2}$.  In addition, this shows the existence of $\alpha$ in Lemma \ref{p_decomposition} since $e_0$ is the steady state of \eqref{e_pde_changed_var}.  We remark that the potential positivity of the total mass in Theorem \ref{p_theorem} follows from the work below.
\begin{lem}\label{p_decomposition_0th_order}
Suppose that $W$ satisfies \eqref{e_pde_changed_var}.  Then there exists $\alpha$ such that
\[
	\|W - \alpha y e^{-y^2/8}\|_2 \lesssim e^{-\tau/2}
.\]
\end{lem}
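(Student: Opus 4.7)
The plan is to decompose $W = \alpha(\tau) e_0 + W^\perp$, where $e_0(y) = (2\sqrt{\pi})^{-1/2} y e^{-y^2/8}$ is the $L^2$-normalized ground state of $M$ (with $Me_0=0$) and $W^\perp \in \Span\{e_1, e_2, \dots\}$, and then to establish (i) $\|W\|_2$ is uniformly bounded in $\tau$, (ii) $\|W^\perp(\tau)\|_2 \lesssim e^{-\tau/2}$, exploiting the spectral gap $M|_{e_0^\perp} \geq 1$, and (iii) $\alpha(\tau) \to \alpha_\infty$ at rate $e^{-\tau/2}$. Claims (ii) and (iii) together yield $\|W - \alpha_\infty e_0\|_2 \lesssim e^{-\tau/2}$, which is the lemma with $\alpha = \alpha_\infty/\sqrt{2\sqrt{\pi}}$.

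For (i), let $F$ denote the right-hand side of \eqref{e_pde_changed_var}, a combination of $W_y$, $yW$, and $W$ with coefficients of size $e^{-\tau/2}$. Pairing \eqref{e_pde_changed_var} with $W$ gives
\[\frac{1}{2}\frac{d}{d\tau}\|W\|_2^2 + Q(W) = \int W\,F\,dy,\]
and Cauchy--Schwarz together with \eqref{e_q_split} bound $|\int W F\,dy|$ by $Ce^{-\tau/2}\|W\|_2(Q(W)^{1/2}+\|W\|_2)$. An AM-GM split absorbs a small multiple of $Q(W)$ into the left-hand side, giving $\frac{d}{d\tau}\|W\|_2^2 \lesssim e^{-\tau/2}\|W\|_2^2$, and Gronwall with $\int_0^\infty e^{-s/2}ds < \infty$ delivers the desired uniform bound.

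For (iii), set $\alpha(\tau) := \langle W, e_0\rangle$. Testing \eqref{e_pde_changed_var} against $e_0$ kills the $MW$ contribution by self-adjointness and $Me_0=0$, so $\alpha'(\tau) = \langle F, e_0\rangle$. Since $e_0$, $e_0'$, and $ye_0$ are Gaussian-decaying, integration by parts on the $W_y$ piece followed by Cauchy--Schwarz yields $|\alpha'(\tau)| \lesssim e^{-\tau/2}\|W\|_2 \lesssim e^{-\tau/2}$, which is integrable in $\tau$; hence $\alpha(\tau)$ is Cauchy with limit $\alpha_\infty$ satisfying $|\alpha(\tau) - \alpha_\infty| \lesssim e^{-\tau/2}$.

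For (ii), since $Me_0=0$ the orthogonal part satisfies $W^\perp_\tau + M W^\perp = F - \alpha'(\tau)e_0$; the $\alpha'(\tau)e_0$ term drops out of the $L^2$ energy identity by orthogonality, $Q(W^\perp)=Q(W)$ (as $Q(e_0)=0$), and the spectral gap gives $Q(W^\perp)\geq \|W^\perp\|_2^2$. The same forcing estimate as in (i), now combined with $\|W\|_2 \leq \|W^\perp\|_2 + |\alpha| \lesssim \|W^\perp\|_2 + 1$, produces for $\tau$ sufficiently large
\[\frac{d}{d\tau}\|W^\perp\|_2^2 + (2-Ce^{-\tau/2})\|W^\perp\|_2^2 \lesssim e^{-\tau/2}\|W^\perp\|_2,\]
and an integrating-factor argument converts this to $\|W^\perp\|_2 \lesssim e^{-\tau/2}$. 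The main technical obstacle is that $F$ contains the unweighted terms $W_y$ and $yW$, which the $L^2$ norm alone does not control; closing the energy argument requires the quadratic form $Q$ through \eqref{e_q_split} together with the $e^{-\tau/2}$ smallness of the coefficients, which is precisely what makes the AM-GM absorption viable.
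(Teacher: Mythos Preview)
Your proposal is correct and follows essentially the same approach as the paper: both arguments first establish the uniform $L^2$ bound on $W$ via the energy identity and \eqref{e_q_split}, then split $W$ into its $e_0$-component and orthogonal part, obtain $|\alpha'(\tau)|\lesssim e^{-\tau/2}$ by testing against $e_0$, and finally use the spectral gap $Q\geq \|\cdot\|_2^2$ on $e_0^\perp$ to close the differential inequality $\frac{d}{d\tau}\|W^\perp\|_2^2 + (2-Ce^{-\tau/2})\|W^\perp\|_2^2 \lesssim e^{-\tau/2}\|W^\perp\|_2$. The only cosmetic difference is that the paper substitutes $W=W_1 e_0+\tilde W$ directly into the forcing before pairing, whereas you keep $F$ in terms of $W$ and then bound $\|W\|_2\lesssim \|W^\perp\|_2+1$; the two are equivalent.
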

\begin{proof}
First, we show that $W$ is bounded in $L^2$.  To this end, multiplying \eqref{e_pde_changed_var} by $W$ and integrating by parts we get
\[ \frac{1}{2} \frac{d}{dt} \int |W|^2 dy + Q(W)
	= \int \left[\left(\frac{\cbar}{2e^\tau} - \frac{3}{2e^{\tau/2}}\right) y W^2 - \frac{\cbar}{2e^{\tau/2}} W^2\right] dy
	\lesssim e^{-\tau/2} \left[ Q(W) + \|W\|_2^2\right]
,\]
where we used \eqref{e_q_split} to obtain the second inequality.  Using the non-negativity of $Q$ and solving the differential inequality, we obtain
\[\frac{d}{dt} \|W\|_2^2  \lesssim e^{-\tau/2} \|W\|_2^2.\]
Integrating this gives us that $W$ is uniformly bounded in $L^2$.

Now we finish the proof in two steps. First, we look at the projection of $W$ onto $e_0$.  This gives us the steady state.  Then we look at the component of $W$ orthogonal to $e_0$.  To be explicit, we decompose $W$ as
\begin{equation}\label{e_w_decomposition}
	W = W_1(\tau) e_0 + \tilde W
,\end{equation}
where $\tilde W$ is an element of $\Span\{e_1,e_2,\dots\}$.

In order to understand $W_1$, multiply \eqref{e_pde_changed_var} by $e_0$ and integrate by parts to obtain
\[
\left|(W_1)_\tau\right|
	\lesssim e^{-\tau/2} \|W\|_2
	\lesssim e^{-\tau/2}
.\]
Hence there exists $\alpha'$ such that $W_1$ tends to $\alpha'$ as $\tau$ tends to infinity.  Moreover, we have that $|\alpha' - W_1| \lesssim e^{-\tau/2}$.  Hence we need only show that $\tilde W$ decays fast enough in order to finish the proof.

To obtain the decay of $\tilde W$, we use \eqref{e_w_decomposition} in \eqref{e_pde_changed_var} to obtain
\[\begin{split}
\tilde W_\tau + M\tilde W 
	&= e^{-\tau/2} \left[ \left(\frac{\cbar}{2e^{\tau/2}} - \frac{3}{2}\right)\left(\tilde W_y - \frac{y}{4}\tilde W \right) - \frac{\cbar}{2} \tilde W  \right]\\
	&~~~~+ e^{-\tau/2} \left[\left(\frac{\cbar}{2e^{\tau}} - \frac{3}{2e^{\tau/2}}\right)\left(W_1 (e_0)_y - \frac{y}{4}W_1 e_0 \right) - \frac{\cbar}{2e^{\tau/2}} W_1 e_0 \right] - (W_1)_\tau e_0
.\end{split}\]
Noting that $\tilde W$ lives in the span of $e_1, e_2, \dots$, we have that $Q(\tilde W) \geq \|\tilde W\|_2^2$.  Hence, when we multiply the equation above by $\tilde W$, integrate by parts and use our inequality on $Q$, we obtain
\[\frac{1}{2} \frac{d}{dt} \|\tilde W\|_2^2 + \left(1 - C e^{-\tau/2}\right)\|\tilde W\|_2^2
	\lesssim e^{-\tau/2} \|\tilde W\|_2.\]
Solving this differential inequality yields
\[\|\tilde W\|_2 \lesssim e^{-\tau/2},\]
finishing the proof.
\end{proof}

\begin{rem}
By changing coordinates, we see that
\[
	W_0(y) = e^{y^2/8} e^{y} v_0(y),
\]
which gives us that
\[
	\langle W_0, y e^{-y^2/8}\rangle
		=  \int_0^\infty y e^{y} v_0(y) dy
		=  \int_0^\infty \xi v_0(\xi) d\xi.
\]
Hence, if $\int \xi v_0(\xi)d\xi$ is large enough, then $\alpha_0$ must be positive.  When our equation, \eqref{e_shifted_linearized_kpp}, is used to approximate Fisher-KPP, this may be overcome by either choosing a larger initial condition or running the system for sufficiently long in order that this integral is large, depending on whether one is looking for a supersolution or subsolution.
\end{rem}

Now we investigate $g$ and $R$ in \eqref{e_decomposition}.  The ansatz implicit in \eqref{e_decomposition} gives us the following equation
\[\begin{split}
- \frac{e^{-\tau/2}}{2}g+&e^{-\tau/2}Mg + R_\tau + MR\\
	&=  e^{-\tau/2} \left[\frac{3\alpha}{4} y^2 \e - \frac{\alpha\cbar}{2}y\e - \frac{3\alpha}{2} \e\right]\\
	&~~~~+  e^{-\tau}\left[\frac{\alpha\cbar}{2}\e - \frac{\alpha\cbar}{4} y^2 \e - \frac{3}{2} g_y + \frac{3y}{8} g - \frac{\cbar}{2}g + \frac{\cbar e^{-\tau/2}}{2}\left(g_y - \frac{y}{4}g\right)   \right]\\
	&~~~~+  e^{-\tau/2}\left[ \left(\frac{ \cbar e^{-\tau/2}}{2} - \frac{3}{2}\right)\left(R_y - \frac{y}{4}R\right) - \frac{\cbar}{2}R \right]
,\end{split}\]
where $\alpha$ is as in Lemma \ref{p_decomposition_0th_order}.  We separate this into equations for $g$ and $R$ by associating the terms on the right of order $e^{-\tau/2}$ with $g$ and the rest with $R$.  This yields
\begin{equation}\label{e_g}
Mg - \frac{g}{2} =  \alpha\e \left[\frac{3}{4} y^2  - \frac{\cbar}{2}y - \frac{3}{2} \right]
,\end{equation}
and
\begin{equation}\label{e_R}
R_\tau + MR
	= e^{-\tau} f(y,\tau) +  e^{-\tau/2}\left[ \left(\frac{ \cbar e^{-\tau/2}}{2} - \frac{3}{2}\right)\left(R_y - \frac{y}{4}R\right) - \frac{\cbar}{2}R \right]
,\end{equation}
where $f$ is given by
\[
f(\tau,y) 
	= \left[ \frac{\alpha\cbar}{2}\e - \frac{\alpha\cbar}{4} y^2 \e - \frac{3}{2} g_y + \frac{3y}{8} g - \frac{\cbar}{2}g + \frac{\cbar e^{-\tau/2}}{2}\left(g_y - \frac{y}{4}g\right)\right]
.\]
We first show that \eqref{e_g} is well defined and that $g$ satisfies the properties claimed.  Then we show \eqref{e_R} is well-defined as well.

\begin{lem}\label{p_decomposition_g}
There exists a smooth solution $g \in X$ of \eqref{e_g} which is locally bounded in $C^1$.  In addition, $g_y(0) = 0$ if and only if $\cbar = 3\sqrt{\pi}$.  
\end{lem}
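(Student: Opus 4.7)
The plan is to establish existence, smoothness, and $X$-membership of $g$ by the spectral theory of $M-\tfrac{1}{2}$, and then to pin down $g_y(0)$ as an explicit linear function of $\cbar$ by pairing \eqref{e_g} against a Gaussian-decaying solution of the homogeneous equation $(M-\tfrac{1}{2})\psi=0$.

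For existence and regularity, observe that $M$ is self-adjoint on $L^2(\R_+)$ with the Dirichlet condition at $y=0$ inherited from $W(\tau,0)=0$. Its eigenfunctions are the odd Hermite-type modes of the full-line operator $-\partial_y^2+\tfrac{y^2}{16}-\tfrac{3}{4}$, and a short calculation shows its spectrum is $\{0,1,2,\dots\}$; in particular $\tfrac{1}{2}$ is not an eigenvalue, so $M-\tfrac{1}{2}$ is boundedly invertible on $L^2(\R_+)$. The right-hand side of \eqref{e_g},
\[ h(y) := \alpha\, e^{-y^2/8}\!\left[\tfrac{3}{4}y^2-\tfrac{\cbar}{2}y-\tfrac{3}{2}\right], \]
has Gaussian decay and lies in $L^2(\R_+)$, so there is a unique $g\in D(M)$ with $g(0)=0$ satisfying \eqref{e_g}; in particular $g\in X$. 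Elliptic bootstrapping on the smooth coefficients yields $g\in C^\infty(\R_+)$ with local $C^1$ bounds.

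For the dichotomy on $g_y(0)$, I will produce a Gaussian-decaying solution $\psi$ of $(M-\tfrac{1}{2})\psi=0$ with $\psi(0)\neq 0$ and integrate by parts against it. Substituting $\psi=e^{-y^2/8}P(y)$ reduces the homogeneous equation to $-P''+\tfrac{y}{2}P'-P=0$, and the rescaling $y=\sqrt{2}z$ turns this into Hermite's equation $P_{zz}-zP_z+2P=0$. Taking its polynomial solution (Hermite index $n=2$) gives
\[ \psi(y) = e^{-y^2/8}\!\left(\tfrac{y^2}{2}-1\right), \qquad \psi(0)=-1. \]
Multiplying \eqref{e_g} by $\psi$ and integrating by parts twice on $(0,\infty)$—boundary terms at infinity vanish by Gaussian decay, and the term at $0$ involving $g$ vanishes since $g(0)=0$—yields the Green identity
\[ \psi(0)\,g_y(0) = \int_0^\infty \psi\, h\, dy. \]
A direct Gaussian-moment computation on the right, using the elementary integrals $\int_0^\infty y^k e^{-y^2/4}\,dy$, evaluates the integral to $\alpha(3\sqrt{\pi}-\cbar)$, so $g_y(0)=\alpha(\cbar-3\sqrt{\pi})$. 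In the nontrivial regime $\alpha\neq 0$, this gives $g_y(0)=0$ iff $\cbar=3\sqrt{\pi}$.

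The main obstacle is identifying the explicit Gaussian-decaying solution $\psi$ of the homogeneous equation; without it, the dependence of $g_y(0)$ on $\cbar$ would have to be extracted through parabolic cylinder functions. Once the Hermite index $n=2$ is recognized, however, the threshold $\cbar=3\sqrt{\pi}$ drops out of a short moment calculation.
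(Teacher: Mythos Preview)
Your argument is correct and takes a genuinely different route from the paper for the key dichotomy on $g_y(0)$. The paper proceeds by writing down the full explicit solution: after the change of variables $z=y^2/4$, $G=e^{z/2}g$, it obtains a confluent-hypergeometric-type ODE, assembles $G$ out of two special-function series $F_2$ and $H$ (following Ebert--van~Saarloos), fixes the free constants by requiring $G(0)=0$ and membership in $X$, and then reads off $g_y(0)$ from the surviving $\sqrt{z}$-coefficient $2\cbar-6\sqrt{\pi}$. Your approach bypasses all of this: you recognize the quadratic Gaussian $\psi(y)=e^{-y^2/8}(y^2/2-1)$ as the decaying homogeneous solution of $(M-\tfrac12)\psi=0$ with $\psi(0)\neq 0$, and a single Green's identity reduces $g_y(0)$ to a Gaussian moment integral yielding $g_y(0)=\alpha(\cbar-3\sqrt{\pi})$. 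This is more economical for the purpose at hand, since the paper's explicit formula for $g$ is not used elsewhere. The paper's approach, on the other hand, gives the full profile of $g$, which could be useful for finer asymptotics. One small point worth tightening: to justify dropping the boundary terms at infinity in the Green's identity you are implicitly using that $g$ and $g_y$ decay; this follows from standard WKB/harmonic-oscillator asymptotics for the subdominant solution (indeed $g(y)\sim p(y)e^{-y^2/8}$), but a sentence to that effect would close the gap.
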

\begin{proof}
First we show, abstractly, that such a solution exists.  Then we write down an explicit solution to the equation.  This explicit solution allow us to understand $g_y(0)$.

In order to show the existence of a solution $g\in X$, we proceed as in Lemma \ref{p_decomposition_0th_order}.  Namely, we write
\begin{equation}
	g = g_1e_0 + \tilde g
,\end{equation}
where $\tilde g$ is in the span of $e_1, e_2, \dots$. To bound $g_1$, we multiply \eqref{e_g} by $e_0$ and integrate.  This gives an explicit formula for $g_1$ independent of $\tilde g$.  Namely
\begin{equation}\label{e_g_1}
g_1 = -2 \alpha \int \left[\frac{3}{4}y^2 - \frac{\cbar}{2} - \frac{3}{2} \right] e_0(0) \e dy
.\end{equation}

Then, fixing $g_1$ as this value, we simply write down the equation for $\tilde g$ given by
\begin{equation}\label{e_tilde_g}
	M\tilde g - \frac{\tilde g}{2}
		= \alpha\left[\frac{3}{4}y^2 - \frac{\cbar}{2}y - \frac{3}{2}\right] + \frac{g_1}{2} e_0.
\end{equation}
On $\Span\{e_1, e_2, \dots\}$ with the norm of $X$, the operator $M - 1/2$ is coercive.  Here we are using that for $\phi \in X\cap\Span\{e_1,e_2,\dots\}$, we have $Q(\phi) \geq \|\phi\|_2^2$.  Hence, the Lax-Milgram theorem implies that \eqref{e_tilde_g} is uniquely solvable in $\Span\{e_1,e_2,\dots\}$.  This, along with \eqref{e_g_1}, gives us that \eqref{e_g} is uniquely solvable in $X$.  The standard elliptic theory, as in \cite{GilbargTrudinger}, then, implies that $g$ is in fact in $H_{loc}^k$ for every $k$.  This, in addition, implies that $g$ is smooth and locally bounded in $C^1$.

We now solve \eqref{e_g} explicitly.  By changing variables to $z = y^2/4$ and letting $G = e^{z/2}g$, we obtain the equation:
\[
	-zG_{zz} - \left(\frac{1}{2} - z\right)G_z - G =  \alpha \left[3z  - \cbar\sqrt{z} - \frac{3}{2} \right]
.\]
Following the work of Ebert and van Saarloos in \cite{EbertVanSaarloos}, shows us that the explicit solution is of the form
\[G(z) = \alpha \left[\frac{3}{2} + 2\cbar\sqrt{z} - \frac{3}{2} F_2(z) + a_1(1 - 2z) + a_2 H(z)  \right]\]
where $F_2$ and $H$ are given by 
\begin{equation}\label{e_hypergeometric}
F_2(z) = \sqrt{\pi} \sum_{n=2}^\infty \frac{z^n}{n(n-1) \Gamma(1/2 + n)},
	~~\text{ and }~~
	H(z) = -\frac{\sqrt{z}}{4} \sum_{n=0}^\infty \frac{z^n}{n!} \frac{\Gamma(-1/2 + n)}{\Gamma(3/2 + n)}
.\end{equation}
As in \cite{EbertVanSaarloos}, one may check that
\[
	\lim_{z\to\infty} \frac{H(z)}{z^{-3/2}e^z} = -\frac{1}{4}, ~~\text{ and }~~
		\lim_{z\to\infty} \frac{F_2(z)}{z^{-3/2}e^z} = \sqrt{\pi}
.\]
Hence $H$ and $F_2$ are clearly not in $L^2$ in our original variables, even with the additional $e^{-z/2}$ factor.  Thus, we must choose $a_2$ such that these terms cancel at $z = \infty$.  In addition, we must choose $a_1$ such that $G(0) = 0$.  Hence, we obtain
\[\begin{split}
	G(z) &= \alpha \left[\frac{3}{2} + 2\cbar\sqrt{z} - \frac{3}{2} G_2(z) + \frac{-3}{2}(1 - 2z) - 6\sqrt{z}H(z)  \right]\\
	&= \alpha \left[2\cbar\sqrt{z} - \frac{3}{2} G_2(z) + 3z - 6 \sqrt{\pi} H(z)  \right]\\
.\end{split}\]
By uniqueness, we may return to the original variables to obtain an explicit formula for $g_y(0)$.  With this formula, we can easily see that $g_y(0) = 0$ if and only if $2\cbar = 6\sqrt{\pi}$, finishing the proof.
\end{proof}

Lemmas \ref{p_decomposition_0th_order} and \eqref{p_decomposition_g} tell us that $R$ must decay to zero in $L^2$ as $\tau$ tends to infinity.  This is a key fact that we use in the following lemma, which finishes the proof of Lemma \ref{p_decomposition}.
\begin{lem}\label{p_decomposition_R}
Let $R$ satisfy \eqref{e_R} with $\alpha$ and $g$ given above.  Then we have the following bounds
\[\|R\|_2
	\lesssim \tau e^{-\tau}, ~~\text{ and }~~ |R_y(0)| \lesssim \tau e^{-\tau}
.\]
\end{lem}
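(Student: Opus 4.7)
The plan is to mimic the strategy of Lemma~\ref{p_decomposition_0th_order}, but one order finer. Before touching \eqref{e_R} directly, I would first observe that Lemmas~\ref{p_decomposition_0th_order} and \ref{p_decomposition_g} already deliver the crude baseline
\[
\|R\|_2 \;=\; \|W - \alpha y e^{-y^2/8} - e^{-\tau/2} g\|_2 \;\lesssim\; e^{-\tau/2},
\]
since $g \in X$ is a fixed element of $L^2$. This crude bound is what I would feed back into the $R$-dependent terms on the right of \eqref{e_R} to extract an effectively $O(e^{-\tau})$ forcing.

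The first main step is the refined $L^2$ estimate. Split $R = R_1(\tau) e_0 + \tilde R$ with $\tilde R \in \Span\{e_1,e_2,\ldots\}$, exactly as in Lemma~\ref{p_decomposition_0th_order}. For $\tilde R$, project \eqref{e_R} onto $\Span\{e_1,e_2,\ldots\}$, test against $\tilde R$, and use $Q(\tilde R) \geq \|\tilde R\|_2^2$. The right-hand side is controlled by $e^{-\tau}\|f\|_2 + e^{-\tau/2}\|R\|_2$ plus absorbable lower-order terms, and the baseline $\|R\|_2 \lesssim e^{-\tau/2}$ collapses both of these into $O(e^{-\tau})$. One obtains
\[
\frac{d}{d\tau}\|\tilde R\|_2 + \bigl(1 - C e^{-\tau/2}\bigr)\|\tilde R\|_2 \;\lesssim\; e^{-\tau}.
\]
Because the forcing decays at precisely the homogeneous rate $e^{-\tau}$, the integrating factor $e^{\tau}$ turns this into $\frac{d}{d\tau}(e^{\tau}\|\tilde R\|_2) \lesssim 1$, and the resonance produces the extra $\tau$, giving $\|\tilde R\|_2 \lesssim \tau e^{-\tau}$.

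For $R_1$, project \eqref{e_R} onto $e_0$; since $M e_0 = 0$, this yields the ODE
\[
(R_1)_\tau \;=\; e^{-\tau}\langle f, e_0\rangle + e^{-\tau/2}\langle \text{(lower order)}\cdot R,\, e_0\rangle \;\lesssim\; e^{-\tau},
\]
where the baseline bound on $\|R\|_2$ is used again. By construction, $R_1(\tau)\to 0$ as $\tau\to\infty$ (the long-time mass of $W$ on $e_0$ has already been absorbed into $\alpha$ in Lemma~\ref{p_decomposition_0th_order}), so integrating from $\tau$ to $\infty$ gives $|R_1(\tau)| \lesssim e^{-\tau}$. Combining with the $\tilde R$ bound yields $\|R\|_2 \lesssim \tau e^{-\tau}$.

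For the pointwise bound $|R_y(0)|\lesssim \tau e^{-\tau}$, I would use that $R$ satisfies the Dirichlet condition $R(\tau,0)=0$ (inherited from $W(\tau,0)=0$ together with $y e^{-y^2/8}|_{y=0}=0$ and $g(0)=0$ from the explicit formula in Lemma~\ref{p_decomposition_g}). Then I would bootstrap the $L^2$ bound to an $H^2$-type bound by testing the equation against $MR$ (or equivalently $R_\tau$), which produces a parallel estimate $\|MR\|_2 \lesssim \tau e^{-\tau}$ after the same resonance argument. One-dimensional Sobolev embedding $H^2_{\mathrm{loc}}((0,\infty)) \hookrightarrow C^1$ then converts this into a pointwise bound on $R_y(0)$ of the same order. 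I expect the main obstacle to be precisely this pointwise step: the growing potential $y^2/16$ in $M$ and the weighted first-order term $R_y - \tfrac{y}{4}R$ on the right of \eqref{e_R} force the higher-order energy estimates to be carried out in the weighted space $X$ rather than plain $H^2$, so some care is needed to track how the weight $y\phi \in L^2$ interacts with the testing function. The $L^2$-level argument, by contrast, is a nearly mechanical repetition of the proof of Lemma~\ref{p_decomposition_0th_order} with one extra use of the baseline bound.
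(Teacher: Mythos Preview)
Your $L^2$ argument is essentially the paper's own: the same decomposition $R = R_1 e_0 + \tilde R$, the same use of the crude baseline $\|R\|_2 \lesssim e^{-\tau/2}$ coming from Lemmas~\ref{p_decomposition_0th_order} and~\ref{p_decomposition_g}, the same resonant differential inequality for $\|\tilde R\|_2$ producing the extra factor of $\tau$, and the same integration from $\tau$ to $\infty$ for $R_1$. The only cosmetic difference is that the paper treats $R_1$ first and then feeds the bounds on $R_1$ and $(R_1)_\tau$ into the equation for $\tilde R$, whereas you do $\tilde R$ first using the crude bound on the full $R$; both orderings work.

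Where you diverge from the paper is in the pointwise step $|R_y(0)| \lesssim \tau e^{-\tau}$. The paper does not run a second energy estimate at all: it simply observes that once $\|R\|_2$ and the $L^2$ norm of the right-hand side of~\eqref{e_R} are $O(\tau e^{-\tau})$, standard parabolic regularity (Krylov's H\"older and Sobolev theory) upgrades this to a local $C^1$ bound of the same order, and in particular controls $R_y(0)$. Your proposed route---test against $MR$, obtain $\|MR\|_2 \lesssim \tau e^{-\tau}$, then use one-dimensional Sobolev embedding---would also work and is more self-contained, but as you correctly anticipate, the weighted first-order term $e^{-\tau/2}(R_y - \tfrac{y}{4}R)$ forces you to track the $X$-norm rather than plain $H^2$, which adds bookkeeping the paper avoids by the black-box appeal. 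Either way the conclusion is the same; the paper's version is shorter, yours is more explicit.
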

\begin{proof}
We proceed by decomposing $R$ as we did $W$ and $g$ in the proofs of Lemmas \ref{p_decomposition_0th_order} and \ref{p_decomposition_g}.  Namely, let
\begin{equation}\label{e_R_decomposition}
	R = R_1(\tau) e_0 + \tilde R
,\end{equation}
where $\tilde R$ is orthogonal to $e_0$.  To obtain a bound on $R_1$, we first note that Lemmas \ref{p_decomposition_0th_order} and \ref{p_decomposition_g} imply that $R$ decays to zero in $L^2$ as least at the rate $e^{-\tau/2}$.  Then we multiply \eqref{e_R} by $e_0$ and integrate to obtain
\[
(R_1)_\tau
	= e^{-\tau} \langle f, e_0\rangle -  e^{-\tau/2}\langle R, \left(\frac{ \cbar e^{-\tau/2}}{2} - \frac{3}{2}\right)\left((e_0)_y + \frac{y}{4}e_0\right) + \frac{\cbar}{2}e_0 \rangle
.\]
Applying Cauchy-Schwarz, we obtain
\begin{equation}\label{e_prelim_remainder1}
	|(R_1)_\tau| \lesssim e^{-\tau} + e^{-\tau/2} \|R\|
.\end{equation}
Since we know that $\|R\|_2\lesssim e^{-\tau/2}$, it follows that
\[
	|(R_1)_\tau| \lesssim e^{-\tau}
.\]
This gives us that
\[
	|R_1(\tau)|
		=|R_1(\infty) - R_1(\tau)|
		=\left| \int_\tau^\infty (R_1)_\tau(s) ds\right|
		\lesssim \int_\tau^\infty e^{-s} ds
		= e^{-\tau}
.\]
This is the desired bound on $R_1$.

In order to finish the proof, we need only bound $\tilde R$ in $L^2$. To this end we use the decomposition \eqref{e_R_decomposition} along with \eqref{e_R}, to note that $\tilde R$ satisfies
\[\begin{split}
\tilde R_\tau + M\tilde R
	&= e^{-\tau} f(y,\tau) +  e^{-\tau/2}\left[ \left(\frac{ \cbar e^{-\tau/2}}{2} - \frac{3}{2}\right)\left(\tilde R_y - \frac{y}{4}\tilde R\right) - \frac{\cbar}{2}\tilde R \right]\\
	&~~~~+  R_1 e^{-\tau/2}\left[ \left(\frac{ \cbar e^{-\tau/2}}{2} - \frac{3}{2}\right)\left((e_0)_y - \frac{y}{4}e_0\right) - \frac{\cbar}{2}e_0 \right] - (R_1)_\tau e_0
.\end{split}\]
Multiplying this by $\tilde R$ and integrating by parts yields
\[
\frac{1}{2} \frac{d}{dt}\|\tilde R\|_2^2 + Q(\tilde R)
	\lesssim  e^{-\tau}\|\tilde R\|_2 +  e^{-\tau/2}\left( \int (y + 1)\tilde R^2 dy\right) +  e^{-\tau/2} R_1 \|\tilde R\|_2 + |(R_1)_\tau| \|\tilde R\|_2
.\]
Again using the inequality \eqref{e_q_split} along with the inequality on $R_1$ and $(R_1)_\tau$ that we just obtained, we note that
\begin{equation}\label{e_prelim_remainder2}
\frac{d}{dt}\|\tilde R\|_2^2 + (2 - C  e^{-\tau/2})\|\tilde R\|_2^2
	\lesssim e^{-\tau} \|\tilde R\|_2
,\end{equation}
where $C$ is some universal constant.  Solving this differential inequality gives us the desired inequality
\[\|\tilde R\|_2 \lesssim \tau e^{-\tau},\]
which finishes the proof.

To finish the proof we need to bound $R_y(0)$.  This, however, is a simple consequence of our bounds on the $L^2$ norm of $R$ and the right hand side of \eqref{e_R}.  Indeed, with these, the standard parabolic regularity theory, which may be found, for example, in \cite{KrylovHolder, KrylovSobolev}, give us the desired bound on $R_y(0)$, finishing the proof.
\end{proof}

\bibliography{vanSaar_const.bib}{}
\bibliographystyle{amsplain}

\end{document}